\newtheorem{theorem}{Theorem}[section]
\newtheorem{lemma}[theorem]{Lemma}
\newtheorem{problem}[theorem]{Problem}
\newenvironment{proof}[1][Proof]{\noindent\textbf{#1.} }
{\hfill \ \rule{0.5em}{0.5em}}
\title{The Zarankiewicz problem in 3-partite graphs}
\date{}
\author{	Michael Tait\thanks{Department of Mathematical Sciences, Carnegie Mellon University, 
\texttt{mtait@cmu.edu}. Research is supported by NSF grant DMS-1606350}
	\and
	Craig Timmons\thanks{Department of Mathematics and Statistics, 
	California State University Sacramento, \texttt{craig.timmons@csus.edu}.
Research supported in part by Simons Foundation Grant \#359419.}
	}
\begin{document}

\maketitle

\begin{abstract}
Let $F$ be a graph, $k \geq 2$ be an integer, and write $\textup{ex}_{ \chi \leq k } (n ,  F)$ for the 
maximum number of edges in an $n$-vertex graph that is $k$-partite and has no subgraph isomorphic 
to $F$.  The function $\textup{ex}_{ \chi \leq 2} ( n , F)$ has been studied by many researchers.
Finding $\textup{ex}_{ \chi \leq 2} (n , K_{s,t})$ is a 
special case of the Zarankiewicz problem.  
We prove an analogue of the K\"{o}v\'{a}ri-S\'{o}s-Tur\'{a}n Theorem for 3-partite graphs 
by showing 
\[
\textup{ex}_{ \chi \leq 3} (n , K_{s,t} ) \leq \left( \frac{1}{3} \right)^{1 - 1/s} \left( \frac{ t - 1}{2}  + o(1) \right)^{1/s} n^{2 - 1/s}
\]
for $2 \leq s \leq t$.  
Using Sidon sets constructed by Bose and Chowla, we prove that this upper bound is asymptotically best 
possible in the case that $s = 2$ and $t \geq 3$ is odd, i.e.,
$\textup{ex}_{ \chi \leq 3} ( n , K_{2,2t+1} ) = \sqrt{ \frac{t}{3}} n^{3/2} + o(n^{3/2})$
for $t \geq 1$.  
In the cases of $K_{2,t}$ and $K_{3,3}$, we use a result of Allen, Keevash, Sudakov, and Verstra\"{e}te,
to show that a similar upper bound holds for all $k \geq 3$, 
and gives a better constant when $s=t=3$.  Lastly, we point out an interesting
 connection between difference families from design theory and $\textup{ex}_{ \chi \leq 3 } (n ,C_4)$. 
\end{abstract}


\section{Introduction}

Let $G$ and $F$ be graphs.  We say that $G$ is \emph{$F$-free} if $G$ does not contain 
a subgraph that is isomorphic to $F$. 
The \emph{Tur\'{a}n number of $F$} is the maximum 
number of edges in an $F$-free graph with $n$ vertices.  
This maximum is denoted $\textup{ex}(n , F)$.  An 
$F$-free graph with $n$ vertices and $\textup{ex}(n , F)$ edges is 
called an \emph{extremal graph} for $F$.  One of the most well-studied cases
is when $F = C_4$, a cycle of length four.  This problem was considered by 
Erd\H{o}s \cite{1936} in 1938.
While this arose as a problem in extremal graph theory,
the best constructions come from finite geometry and use projective planes 
and difference sets.  
Roughly 30 years later, 
Brown \cite{brown}, and 
Erd\H{o}s, R\'{e}nyi, and S\'{o}s \cite{erdos-renyi, erdos-renyi-sos}
independently showed that $\textup{ex}(n , C_4) = \frac{1}{2} n^{3/2} + o(n^{3/2})$.
They constructed, for each prime power $q$, a $C_4$-free 
graph with $q^2 + q + 1$ vertices and $\frac{1}{2}q(q+1)^2$ edges.  
These graphs are examples of orthogonal polarity graphs 
which have since been studied and applied to 
other problems in combinatorics.
Answering a question of Erd\H{o}s, F\"{u}redi \cite{furedi-1, furedi-2} showed 
that for $q > 13$, orthogonal polarity graphs 
are the only extremal graphs for $C_4$ when the number 
of vertices is $q^2 + q +1$.  F\"{u}redi \cite{F3} also
used finite fields to construct, for each $t \geq 1$, $K_{2,t+1}$-free 
graphs with $n$ vertices and $\sqrt{ \frac{t}{2} } n^{3/2} + o(n^{3/2})$ edges.  
This construction, together with the famous upper bound of
K\"{o}v\'{a}ri, S\'{o}s, and Tur\'{a}n \cite{KST}, shows that 
$\textup{ex}(n , K_{2,t+1} ) = \sqrt{ \frac{t}{2} } n^{3/2} + o(n^{3/2})$ for all $t \geq 1$.

Because of its importance in extremal graph theory, variations of
 the bipartite Tur\'{a}n problem have been considered.   
One such instance is to find the maximum number of 
edges in an $F$-free $n \times m$ bipartite graph.
Write $\textup{ex}( n , m , F)$ for this maximum.
Estimating $\textup{ex}(n , n , K_{s,t})$ 
is the ``balanced'' case of the Zarankiewicz problem.  
Recall that the Zarankiewicz problem is to find $z(m,n,s,t)$, which 
is the maximum number of 1's in an $m \times n$ 0-1 matrix with 
no $s \times t$ submatrix of all 1's.  The best known upper bound 
on $z(m,n,s,t)$ was proved by Nikiforov \cite{nikiforov} who showed 
\[
z(m,n,s,t) \leq ( s- t +1)^{1/t} n m^{1 -1/t} + (t-1) m^{2-2/t} + ( t- 2)n
\]
for $s \geq t$.  This improved an earlier bound of F\"{u}redi \cite{furedi z}
in the lower order terms.  
When $m = n,$ one can observe that $z(n,n,s,t) =\textup{ex}(n,n,K_{s,t})$.  
The results of \cite{F3, KST} show
that $\textup{ex}(n , n , K_{2,t+1} ) = \sqrt{t} n^{3/2} + o(n^{3/2})$ for $t \geq 1$.
The case when $F$ is a cycle of even length has also received considerable attention. 
Naor and Verstra\"{e}te \cite{naor-verstraete} studied the case when $F = C_{2k}$.  
More precise estimates were obtained by F\"{u}redi, Naor, and Verstra\"{e}te  \cite{furedi-naor-verstraete}
when $F = C_6$.  
For more results along these lines, see \cite{decaen-szekely-1992, decaen-szekely-1997, gyori-1997}
and the survey of F\"{u}redi and Simonovits \cite{furedi-simonovits} to name a few.

Now we introduce the extremal function that is the focus of this paper.
For an integer $k \geq 2$, define 
\[
\textup{ex}_{ \chi \leq k } ( n , F)
\]
to be the maximum number of edges in an $n$-vertex graph $G$ that is $F$-free and has 
chromatic number at most $k$.  
Thus, $\textup{ex}_{ \chi \leq 2 } (n , F)$ is the maximum number of 
edges in an $F$-free bipartite graph with $n$ vertices (the part sizes need not be the same).    
Trivially, 
\[
\textup{ex}_{ \chi \leq k } ( n , F) \leq \textup{ex}(n , F)
\]
for any $k$.  
In the case that $k = 2$, 
\[
\textup{ex}_{ \chi \leq 2} ( n , K_{2,t} ) = \frac{\sqrt{t-1}}{ 2 \sqrt{2}} n^{3/2} + o (n^{3/2})
\]
by \cite{F3, KST}.  
Our focus will be on $\textup{ex}_{ \chi \leq 3} (n , K_{2,t})$ and our  
first result gives an 
upper bound on $\textup{ex}_{ \chi \leq 3 } ( n , K_{s,t})$.

\begin{theorem}\label{chi 3 ub}
For $n \geq 1$ and $2 \leq s \leq t$, 
\[
\textup{ex}_{ \chi \leq 3} ( n , K_{s,t}) \leq \left( \frac{1}{3} \right)^{1 - 1/s} \left(  \frac{t-1}{2}  + o(1) \right)^{1/s} n^{2-1/s}.
\]
\end{theorem}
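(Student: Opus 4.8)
The plan is to deduce the bound from the Kővári–Sós–Turán estimate, but applied to three \emph{bipartitions of $G$ into one colour class versus the union of the other two}, rather than to the three bipartite pieces of $G$ separately.

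Write the colour classes as $V_1,V_2,V_3$ with $|V_i|=n_i$, so $n_1+n_2+n_3=n$, and let $e_{ij}$ denote the number of edges of $G$ between $V_i$ and $V_j$. Fix $i$ and let $\{i,j,k\}=\{1,2,3\}$. The first step is the observation that the bipartite graph $H_i$ with vertex classes $V_i$ and $V_j\cup V_k$, whose edges are exactly the edges of $G$ incident with $V_i$, contains no copy of $K_{s,t}$ having its $s$-side inside $V_i$: such a copy would be a copy of $K_{s,t}$ in $G$. Running the usual double count for $H_i$ -- for each vertex $w\in V_j\cup V_k$ count the $\binom{d(w)}{s}$ many $s$-subsets of $V_i$ contained in $N_G(w)$, and use that every $s$-subset of $V_i$ has at most $t-1$ common neighbours in $V_j\cup V_k$ -- together with convexity of $x\mapsto\binom{x}{s}$ and the elementary inequalities $\binom{x}{s}\ge (x-s+1)^s/s!$ (for $x\ge s-1$) and $\binom{n_i}{s}\le n_i^s/s!$, I would obtain
\[
e_{ij}+e_{ik}\;=\;e(H_i)\;\le\;(t-1)^{1/s}\,n_i\,(n-n_i)^{1-1/s}+(s-1)(n-n_i).
\]

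Next I would sum this over $i=1,2,3$. Since every edge of $G$ lies in exactly two of $H_1,H_2,H_3$, the left-hand side becomes $2\,e(G)$, giving
\[
2\,e(G)\;\le\;(t-1)^{1/s}\sum_{i=1}^{3}\phi(n_i)+2(s-1)n,\qquad \phi(x):=x\,(n-x)^{1-1/s}.
\]
The crucial point -- the step I would want to get exactly right -- is that $\phi$ is concave on $[0,n]$; a direct computation gives $\phi''(x)=\alpha\,(n-x)^{\alpha-2}\bigl((1+\alpha)x-2n\bigr)$ with $\alpha=1-1/s\in(0,1)$, which is negative throughout $[0,n)$. Hence Jensen's inequality yields $\sum_i\phi(n_i)\le 3\phi(n/3)=n\,(2n/3)^{1-1/s}=(2/3)^{1-1/s}\,n^{2-1/s}$, with equality precisely when $n_1=n_2=n_3$. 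Substituting and using $\tfrac12(t-1)^{1/s}(2/3)^{1-1/s}=(1/3)^{1-1/s}\bigl((t-1)/2\bigr)^{1/s}$ gives
\[
e(G)\;\le\;\left(\tfrac13\right)^{1-1/s}\!\left(\tfrac{t-1}{2}\right)^{1/s} n^{2-1/s}+(s-1)n,
\]
and since $2-1/s>1$ for $s\ge 2$, the term $(s-1)n$ is $o(n^{2-1/s})$ and is absorbed into the $o(1)$ inside the parenthesis, which is the claimed bound.

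The heart of the argument is the idea of grouping two colour classes together: applying Kővári–Sós–Turán to each of $e_{12},e_{13},e_{23}$ individually only gives the constant $3^{-1/2}$ in place of $6^{-1/2}$ when $s=2$, so it is essential to work with the bipartitions $V_i$ versus $V_j\cup V_k$. Granting that, the only real work I expect is (i) making the lower-order $(s-1)$-type error terms in the Kővári–Sós–Turán count uniform in $G$ (handling the case of small average degree separately, where the bound is trivial), and (ii) the concavity check for $\phi$ -- which, pleasantly, holds on all of $[0,n]$, so that no separate treatment of a colour class exceeding $n/2$ is needed and the balanced colouring is exactly the extremal configuration for the relaxation.
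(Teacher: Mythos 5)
Your proof is correct, and it takes a genuinely different route from the paper. The paper applies the K\"{o}v\'{a}ri--S\'{o}s--Tur\'{a}n double count separately to each of the three pairs $(A_i,A_j)$ (splitting the sum $\sum_{v\in A_j}\binom{d_{A_1}(v)}{s}+\sum_{v\in A_k}\binom{d_{A_1}(v)}{s}$ and applying Jensen to each piece), which produces three polynomial inequalities in the six variables $c_{1,2},c_{1,3},c_{2,3},\delta_1,\delta_2,\delta_3$; it then sums these and solves the resulting constrained optimization by Lagrange multipliers in an appendix. You instead apply the double count once per colour class $V_i$ against the union $V_j\cup V_k$ (so each edge is counted in exactly two of the three counts), which collapses the problem to maximizing $\sum_i \phi(n_i)$ with $\phi(x)=x(n-x)^{1-1/s}$ subject to $\sum_i n_i=n$, settled in one line by the concavity of $\phi$ on $[0,n]$. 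Your intermediate KST estimate is formally weaker than the paper's (it corresponds to applying Jensen to the merged sum rather than the two sub-sums, and convexity shows the paper's version dominates), but since the extremal configuration is balanced in both part sizes and edge densities, nothing is lost at the optimum and the final constant $(1/3)^{1-1/s}\bigl(\tfrac{t-1}{2}\bigr)^{1/s}$ comes out the same. The trade-off: the paper's version carries more information (it pins down all three densities $c_{i,j}$ in the extremal case, which the authors exploit in Section 4 to deduce the exact values $\textup{ex}_{\chi\leq 3}(15,C_4)=30$ and $\textup{ex}_{\chi\leq 3}(123,C_4)=615$ from inequality (\ref{special inequality})), while yours gets to the asymptotic bound with far less bookkeeping and no Lagrange multipliers. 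One small point worth making explicit in a write-up: when the average degree $\bar d=e(H_i)/(n-n_i)$ is below $s-1$ the estimate $\binom{\bar d}{s}\ge(\bar d-s+1)^s/s!$ is not available, but then $e(H_i)<(s-1)(n-n_i)$ directly, which is already within the $(s-1)(n-n_i)$ error term you carry, so the case split is harmless.
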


When $s = 2$, Theorem \ref{chi 3 ub} improves the trivial bound 
\[
\textup{ex}_{ \chi \leq 3 } ( n , K_{2,t} ) \leq \textup{ex}(n , K_{2,t}) = \frac{\sqrt{t-1}}{2} n^{3/2} + o (n^{3/2}).
\]

Allen, Keevash, Sudakov, and Verstra\"{e}te \cite{aksv} constructed 3-partite graphs with $n$ vertices that 
are $K_{2,3}$-free and have $\frac{1}{\sqrt{3} } n^{3/2} - n$ edges.  
This construction shows that Theorem \ref{chi 3 ub} is asymptotically best possible in the case that $s=2$, $t = 3$.
Our next theorem, which is the main result of this paper, shows that 
Theorem \ref{chi 3 ub} is, in fact, asymptotically best possible for $s =2$ and all odd integers $t \geq 3$.

\begin{theorem}\label{main theorem}
For any integer $t \geq 1$,
\[
\textup{ex}_{ \chi \leq 3 } ( n , K_{2 , 2t+1} ) = \sqrt{ \frac{t}{3} } n^{3/2} + o( n^{3/2} ).
\]
\end{theorem}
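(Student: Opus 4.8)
The upper bound is immediate. Apply Theorem \ref{chi 3 ub} with $s=2$ and with $t$ replaced by $2t+1$; the right-hand side becomes $\left(\tfrac13\right)^{1/2}\bigl(t+o(1)\bigr)^{1/2}n^{3/2}=\sqrt{t/3}\,n^{3/2}+o(n^{3/2})$. So the content of the theorem is the matching lower bound, and the plan is to exhibit, for a suitable infinite sequence of $n$, a $3$-partite $K_{2,2t+1}$-free graph on $n$ vertices with at least $\sqrt{t/3}\,n^{3/2}-o(n^{3/2})$ edges, and then to interpolate to all $n$.

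The construction I would use is a cyclic, ``one-sided'' variant of a Sidon-set Cayley graph. Fix a prime $q\equiv 1\pmod t$ (there are infinitely many, with consecutive ones differing by $o(q)$) and put $m=(q^2-1)/t$. Let $B\subseteq\mathbb{Z}_{q^2-1}$ be a Bose--Chowla Sidon set with $|B|=q$; being Sidon, every nonzero element of $\mathbb{Z}_{q^2-1}$ has at most one representation $b-b'$ with $b,b'\in B$, and every element has at most two representations $b+b'$. Let $\overline B\subseteq\mathbb{Z}_m$ be the image of $B$ under reduction modulo $m$. Since the kernel of $\mathbb{Z}_{q^2-1}\to\mathbb{Z}_m$ has $t$ elements and $B$ is Sidon, $|\overline B|\ge q-t+1$, and, lifting each representation in $\mathbb{Z}_m$ to one of $t$ possible values upstairs, every nonzero element of $\mathbb{Z}_m$ has at most $t$ representations as a difference of two elements of $\overline B$ and every element has at most $2t$ representations as a sum. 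Now take three disjoint copies $V_1,V_2,V_3$ of $\mathbb{Z}_m$ and, reading indices modulo $3$, join $x\in V_i$ to $y\in V_{i+1}$ exactly when $x-y\in\overline B$. This graph $G$ is $3$-partite and $(2|\overline B|)$-regular, hence has $3m|\overline B|$ edges; with $n=3m$ one checks $3m|\overline B|=\sqrt{t/3}\,n^{3/2}+o(n^{3/2})$.

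The heart of the matter is that $G$ is $K_{2,2t+1}$-free, i.e.\ that every two vertices have at most $2t$ common neighbors. If $x\ne x'$ lie in the same part, say $V_1$, then a common neighbor in $V_2$ forces $x-y,\ x'-y\in\overline B$, so $x-x'$ is a difference of two elements of $\overline B$; there are at most $t$ of these, and likewise at most $t$ common neighbors lie in $V_3$, hence at most $2t$ in all. If $x\in V_1$ and $y\in V_2$, every common neighbor lies in $V_3$, and $z\in V_3$ is one precisely when $z-x\in\overline B$ and $y-z\in\overline B$ (the $V_3$--$V_1$ and $V_2$--$V_3$ rules); adding, $y-x$ is a \emph{sum} of two elements of $\overline B$, of which there are at most $2t$ --- and, unlike for a difference, this bound has no exceptional value, so nothing goes wrong even when $x=y$ in $\mathbb{Z}_m$. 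It is exactly the consistent cyclic orientation of the three ``$\overline B$-steps'' around $V_1\to V_2\to V_3\to V_1$ that converts every cross-part count into a sum; a symmetric rule would leave it a difference, producing a $K_{2,q}$. The other cross-part cases are identical after relabeling, so every pair of vertices has at most $2t$ common neighbors.

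It then remains only to pass from $n=3(q^2-1)/t$ to arbitrary $n$: consecutive admissible values differ by $o(n)$, so padding with isolated vertices costs $o(n^{3/2})$ edges. I expect the genuinely delicate point to be the cross-part analysis above --- finding three compatible bipartite rules that are at once dense enough and free of a large complete bipartite graph between ``aligned'' vertices of different parts --- together with the bookkeeping needed to pin down the constant $\sqrt{t/3}$ exactly (the precise size of $\overline B$, the lifting of representations, and any small-characteristic nuisances).
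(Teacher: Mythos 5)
Your construction is the same as the paper's, just presented in a different coordinate system: the paper's parts are copies of $\mathbb{Z}_{q^2-1}/H$ with $H=\langle (q^2-1)/t\rangle$, which is precisely your $\mathbb{Z}_m$ with $m=(q^2-1)/t$, and their cyclic rule $x+H\sim x+a+H$ (for $a$ in the Bose--Chowla set) is, up to a sign, your $x\sim y$ iff $x-y\in\overline B$; moreover the decisive step --- that the consistent cyclic orientation turns the cross-part common-neighbor equation into a \emph{sum} of two Sidon elements (bounded by $2t$) rather than a difference --- is exactly the argument of Lemma~\ref{new lemma 2}. The only cosmetic difference is that the paper invokes Lemma 2.2 of \cite{tt} to get $(A-A)\cap H=\{0\}$ and hence $|\overline A|=q$ exactly, where you settle for $|\overline B|\ge q-t+1$, which is weaker but asymptotically equivalent.
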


We believe that the most interesting remaining open case is determining the behavior when forbidding $K_{2,2} = C_4$.
\begin{problem}\label{chi 3 problem}
Determine the asymptotic behavior of 
\[
\textup{ex}_{ \chi \leq 3 } ( n , C_4 ).
\]
\end{problem}

In particular it would be very interesting to know whether or not 
$\textup{ex}_{ \chi \leq 2 } ( n , C_4) \sim \textup{ex}_{ \chi \leq 3 } ( n , C_4)$. 
In Section \ref{conclusion}, we use a difference family from design theory to show 
that $\textup{ex}_{ \chi \leq 3} ( 123,C_4) = 615$, where the upper bound 
is a consequence of the counting argument used to prove Theorem \ref{chi 3 ub}.  
For comparison, $\textup{ex}_{\chi \leq 2} ( 123 ,C_4) \leq 521$.  
We discuss this further in Section \ref{conclusion}.

In the special cases $s =2 , t \geq 2$ and $s =t =3 $, we can use a lemma of 
 Allen, Keevash, Sudakov, and Verstra\"{e}te \cite{aksv} to
 prove an upper bound on $\textup{ex}_{ \chi \leq k } ( n , K_{s,t})$ that holds for any $k \geq 3$. 
This argument gives a better constant than the one provided by Theorem \ref{chi 3 ub} when $s=t=3$. 

\begin{theorem}\label{new ub}
Let $k \geq 3$ be an integer.
For any integer $t \geq 2$,  
\[
\textup{ex}_{ \chi \leq k } (n , K_{2,t}) \leq \left(  \left( 1 - \frac{1}{k} \right)^{1/2} + o(1) \right) \frac{ \sqrt{t-1} }{2} n^{3/2}. 
\]
Also,
\[
\textup{ex}_{\chi \leq k} ( n , K_{3,3} ) \leq \left(  \left( 1 - \frac{1}{k} \right)^{2/3} + o(1) \right) \frac{n^{5/3} }{2}.
\]
\end{theorem}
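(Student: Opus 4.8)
The approach is to push the counting argument behind Theorem~\ref{chi 3 ub} from $k=3$ to general $k\ge 3$, running it ``anchored at the colour classes,'' and to call on the lemma of Allen, Keevash, Sudakov, and Verstra\"{e}te for two things: a regularisation/clean-up step, and (in the $K_{3,3}$ case) a sharpening of the leading constant. The factor $\left(1-\tfrac{1}{k}\right)^{1-1/s}$ --- which is $\left(1-\tfrac{1}{k}\right)^{1/2}$ when $s=2$ and $\left(1-\tfrac{1}{k}\right)^{2/3}$ when $s=3$ --- will emerge from a single convexity step inside the count. First, a standard reduction (pass to a subgraph of large minimum degree, and use the ASKV lemma to discard the part of $G$ responsible for any imbalance among its colour classes) shows it suffices to bound $e(G)$ when $G$ is $K_{s,t}$-free, almost regular with $\Delta(G)=(1+o(1))\delta(G)$, and carries a proper $k$-colouring into classes $V_1,\dots,V_k$ each of size $(1+o(1))\,n/k$; here $s=2$ for the first inequality and $s=t=3$ for the second.

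Now fix such a $G$, write $c(v)$ for the colour of $v$ and $d(v)$ for its degree. For each class $V_i$ and each $s$-subset $S\subseteq V_i$, the $K_{s,t}$-free hypothesis forbids $S$ from lying in $t$ of the neighbourhoods $\{N(v):v\notin V_i\}$, so summing over all such $S$ and over $i$ gives
\[
\sum_{v}\ \sum_{i\ne c(v)}\binom{|N(v)\cap V_i|}{s}\ \le\ (t-1)\sum_{i=1}^{k}\binom{|V_i|}{s}.
\]
Because every neighbour of $v$ lies outside $V_{c(v)}$, the inner sum on the left is a sum of $k-1$ binomial coefficients whose arguments total $d(v)$, and convexity of $x\mapsto\binom{x}{s}$ bounds it below by $(k-1)\binom{d(v)/(k-1)}{s}$; this is precisely the step that inserts the factor $(k-1)^{1-s}=\left(1-\tfrac{1}{k}\right)^{s-1}$. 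Using almost-regularity to apply convexity once more over $v$, and the balance of the classes to write $\sum_i\binom{|V_i|}{s}=(1+o(1))\,k\binom{n/k}{s}$, the inequality rearranges to $e(G)\le\left(\left(1-\tfrac{1}{k}\right)^{1-1/s}+o(1)\right)\gamma_{s,t}\,n^{2-1/s}$. For $s=2$ this has $\gamma_{2,t}=\tfrac12\sqrt{t-1}$, the K\"{o}v\'{a}ri-S\'{o}s-Tur\'{a}n constant, which is exactly the first stated bound. For $s=t=3$ the constant coming out this way exceeds $\tfrac12$ by a factor $2^{1/3}$; to kill it one replaces the crude bound ``an $s$-set lies in at most $t-1$ neighbourhoods'' by F\"{u}redi's second-moment estimate on common neighbourhoods of \emph{pairs} in a $K_{3,3}$-free graph --- this is the content of the ASKV lemma in the case $s=t=3$ --- which lowers the constant to $\tfrac12$ in exactly the way it improves the K\"{o}v\'{a}ri-S\'{o}s-Tur\'{a}n bound $\textup{ex}(n,K_{3,3})\le(\tfrac{2^{1/3}}{2}+o(1))n^{5/3}$ to F\"{u}redi's $\textup{ex}(n,K_{3,3})\le(\tfrac12+o(1))n^{5/3}$, while the same $\left(1-\tfrac{1}{k}\right)^{2/3}$ saving survives, giving the second stated bound.

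The real obstacle I expect is the regularisation, not the count: the displayed inequality is genuinely weaker for an unbalanced colouring, since it replaces $k\binom{n/k}{s}$ by the larger quantity $\sum_i\binom{|V_i|}{s}$, and elementary convexity offers no reason why an extremal $K_{s,t}$-free $k$-colourable graph should use near-balanced colour classes. Making that reduction precise --- throwing away, with loss only $o(n^{2-1/s})$ in the main term, whatever part of $G$ creates the imbalance, while in the case $s=t=3$ simultaneously extracting the F\"{u}redi-type gain --- is exactly the role played by the ASKV lemma; granting that, the two convexity steps above complete the proof of both inequalities.
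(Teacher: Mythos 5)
Your counting argument takes a genuinely different route from the paper, and it has two gaps, the second of which you flag yourself but attribute to the wrong source. The paper does not run a K\"ov\'ari--S\'os--Tur\'an count inside the $k$-partition at all. Instead it applies Scott's Sparse Regularity Lemma to obtain an $(\epsilon,p)$-regular partition with $p=n^{\alpha-2}$, invokes Lemma~4.1 of \cite{aksv} as a \emph{density transference} statement (if $e(G)=(\mu^{\alpha-1}+\gamma)\rho p n^2/2$ then the cluster graph $R$ has edge density at least $\mu-\gamma$), observes that $R$ inherits $k$-partiteness from $G$ and hence $e(R)\le\binom{k}{2}(|R|/k)^2$, and reads off $\mu\le 1-1/k+\gamma$. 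The exponent $\alpha-1\in\{1/2,2/3\}$ in the transference is exactly what turns $1-1/k$ into $(1-1/k)^{1-1/s}$. Crucially this requires no balance assumption on the colour classes: the inequality $e(R)\le\binom{k}{2}(|R|/k)^2$ is simply the maximum number of edges in any $k$-partite graph on $|R|$ vertices, so the $k$-partite structure of $G$ is used only through $R$.

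This is where your proposal goes wrong in two places. First, your reduction to almost-regular $G$ with classes of size $(1+o(1))n/k$ is exactly the step you would need, and you are right to be worried about it; but Lemma~4.1 of \cite{aksv} does not ``discard the part of $G$ responsible for imbalance,'' and nothing in the paper performs such a reduction. Your direct count genuinely degrades when the $|V_i|$ are imbalanced (as you note, $\sum_i\binom{|V_i|}{s}$ can exceed $k\binom{n/k}{s}$ by a factor up to $k^{s-1}$), and showing that imbalance also forces $e(G)$ down enough to compensate would require a separate argument that you have not supplied. Second, your $K_{3,3}$ count lands on the constant $2^{1/3}/2$ rather than $1/2$, and the claim that ``the ASKV lemma'' simultaneously supplies a F\"uredi-type second-moment improvement is a misreading of that lemma: the factor $\rho=1$ in the smoothness parametrisation $z(m,n,K_{3,3})=\rho mn^{2/3}+o(mn^{2/3})$ is an input to Lemma~4.1, coming from the known Zarankiewicz bound, not an output of it. In the paper's route this constant is absorbed cleanly because the transference is stated directly in terms of $\rho$; in your route you would have to redo the degenerate-Tur\'an second-moment argument for $K_{3,3}$ \emph{inside} each bipartite pair of the colouring and show the gain survives the convexity step, which you have not done. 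The intuition that $(1-1/k)$ should enter as the density cap on a $k$-partite structure and then be raised to the power $1-1/s$ is correct and matches the paper; the mechanism you propose for realising it does not.
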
 

A random partition into $k$ parts of an $n$-vertex $K_{2,t}$-free graph with $\frac{\sqrt{t-1}}{2} n^{3/2} + o (n^{3/2} )$ edges 
gives a lower bound of 
\[
\textup{ex}_{ \chi \leq k } ( n , K_{2,t}) \geq \left( 1 - \frac{1}{k } \right) \frac{ \sqrt{t-1} }{2}n^{3/2} - o(n^{3/2} ).
\]
Similarly,
\[
\textup{ex}_{ \chi \leq k } (n , K_{3,3} ) \geq \left( 1 - \frac{1}{k } \right) \frac{ n^{5/3}  }{2}  - o(n^{5/3} ).
\]
We would like to remark that the lemma of Allen et.\ al.\ can be used to prove 
a more general version of Theorem \ref{new ub}.
Following \cite{aksv}, a family $\mathcal{F}$ of bipartite 
graphs is \emph{smooth} if there are real numbers 
$1 \leq \beta < \alpha < 2 $ and $\rho \geq 0$ such that 
\[
z(m,n, \mathcal{F}) = \rho m n^{\alpha -1} + O (n^{\beta} )
\]
for all $m \leq n$.  Here 
$z(m,n, \mathcal{F})$ is the maximum number
of edges in an $\mathcal{F}$-free $m \times n$ bipartite graph.
The graphs $K_{2,t}$ and $K_{3,3}$ are smooth.  
Another example of a smooth family is given in \cite{aksv}.
Under the smoothness hypothesis, Allen et.\ al.\ proved the following important
 result in the theory of bipartite Tur\'{a}n numbers, and made progress
 on a difficult conjecture of Erd\H{o}s and Simonovits.
 
 \begin{theorem}[Allen, Keevash, Sudakov, Verstra\"{e}te]\label{aksv theorem}
Suppose that $\mathcal{F}$ is a family of graphs that is $(\alpha ,\beta)$-smooth  
where $2 > \alpha > \beta \geq 1$.
There is a $k_0$ such that if $k$ is an odd integer with $k \geq k_0$ the following holds: every
extremal $\mathcal{F} \cup \{ C_k \}$-free family of graphs is near-bipartite.
\end{theorem}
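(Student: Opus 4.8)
The plan is to deduce the statement from a stability result: every $n$-vertex $\mathcal{F}\cup\{C_k\}$-free graph $G$ with $e(G)\ge(1-o(1))\,\textup{ex}_{\chi\le 2}(n,\mathcal{F})$ edges -- in particular every extremal one -- is near-bipartite. The lower bound $\textup{ex}(n,\mathcal{F}\cup\{C_k\})\ge\textup{ex}_{\chi\le 2}(n,\mathcal{F})$ is free, since a bipartite $\mathcal{F}$-free graph has no odd cycle and hence no $C_k$; by $(\alpha,\beta)$-smoothness this quantity has order $n^{\alpha}$. So it suffices to prove the stability direction, and I would do this by contradiction: assume $G$ is extremal (hence has $\Theta(n^{\alpha})$ edges) but far from bipartite, and exhibit a copy of $C_k$ in $G$.

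The first step is to pass to a well-behaved subgraph. A standard deletion argument turns any $n$-vertex graph with $\Theta(n^{\alpha})$ edges into a subgraph $G'$ with minimum degree $\Theta(n^{\alpha-1})$; the real work in this step is to run the cleaning so that $G'$ still spans a positive proportion of $V(G)$ and, crucially, remains \emph{robustly non-bipartite}, i.e.\ stays non-bipartite after deleting any bounded number of vertices. This is exactly where the hypothesis that $G$ is far from bipartite is used -- no $o(|V(G)|)$ vertices and no $o(e(G))$ edges meet all odd cycles of $G$ -- together with smoothness, which bounds how much the cleaning can cost. Throughout, $G'$ inherits $\mathcal{F}$-freeness and $C_k$-freeness from $G$.

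The heart of the argument is a cycle-spectrum lemma: there is a constant $g_0=g_0(\mathcal{F})$ and an absolute constant $c>0$ such that every $\mathcal{F}$-free, robustly non-bipartite graph with minimum degree $\Theta(n^{\alpha-1})$ contains $C_\ell$ for \emph{every} odd $\ell$ with $g_0\le\ell\le c\,n^{\alpha-1}$. Granting this, set $k_0:=g_0$: then for every odd $k\ge k_0$ and all large $n$ we have $g_0\le k\le c\,n^{\alpha-1}$, so $G'\subseteq G$ contains $C_k$, a contradiction. This is also the point where the oddness of $k$ and the bound $k\ge k_0$ are genuinely needed, since for small odd $k$ one can build dense non-bipartite $\mathcal{F}$-free graphs of odd girth exceeding $k$, so no statement of this kind can hold without a threshold.

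To prove the cycle-spectrum lemma I would combine three ingredients. First, from minimum degree $\Theta(n^{\alpha-1})$ a longest-path / depth-first-search analysis in the spirit of Erd\H{o}s--Gallai and Bondy produces cycles of all lengths in a long interval of a single fixed parity, together with a ``flexible'' path whose two endpoints have many neighbours interleaved along it. Second, robust non-bipartiteness is used to find a short odd cycle that is not an isolated obstruction; here one must exclude the pathological configurations (odd-$K_4$-subdivision, or Escher-wall type structures) responsible for the failure of the Erd\H{o}s--P\'{o}sa property for odd cycles, and it is precisely $\mathcal{F}$-freeness together with smoothness that rules these out -- a configuration of bounded height would either contain a member of $\mathcal{F}$ or carry more edges than the density allowed by smoothness -- and this is where $g_0$ is pinned down. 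Third, an ear- or theta-graph surgery splices the short odd cycle into the flexible path, flipping parity while allowing the length to be tuned in steps of two, producing $C_\ell$ for all odd $\ell$ in the claimed range; once $G$ is shown near-bipartite, comparing $e(G)$ with $\textup{ex}_{\chi\le 2}(n,\mathcal{F})$ via smoothness also recovers the matching edge count. The main obstacle is the coupling of the cleaning step with this second ingredient: simultaneously guaranteeing high minimum degree, a linear-size subgraph, and a robust odd structure, while excluding exactly those non-bipartite configurations whose odd-cycle spectra might miss $k$ -- which is where the $(\alpha,\beta)$-smoothness hypothesis is indispensable and what forces the threshold $k_0$.
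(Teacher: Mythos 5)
First, a point of context: the paper does not prove this statement at all --- it is quoted verbatim from Allen, Keevash, Sudakov, and Verstra\"{e}te \cite{aksv}, and the present paper only borrows their Lemma 4.1 (the density-transference step) for the proof of Theorem \ref{new ub}. So the only meaningful comparison is with the original proof in \cite{aksv}. That proof runs through Scott's Sparse Regularity Lemma: one takes an $(\epsilon,p)$-regular partition of the extremal graph $G$ with $p=n^{\alpha-2}$, uses Lemma 4.1 to transfer the edge density of $G$ to the cluster graph $R$, observes that if $G$ were far from bipartite then $R$ would contain an odd cycle among dense regular pairs, and then embeds $C_k$ into $G$ by winding a path through those regular pairs along a closed odd walk in $R$; the threshold $k_0$ comes out of the regularity constants (the number of clusters). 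Your proposal takes a genuinely different route --- no regularity, but a direct cleaning to minimum degree $\Theta(n^{\alpha-1})$ followed by a cycle-spectrum lemma --- and that is a legitimate thing to attempt, but as written it has a real gap.

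The gap is that your ``cycle-spectrum lemma'' is essentially the whole theorem restated, and the sketch of its proof does not hold together at the two places where it matters. (i) You claim that $(\alpha,\beta)$-smoothness together with $\mathcal{F}$-freeness rules out the Escher-wall / odd-$K_4$-subdivision obstructions. Smoothness is only an \emph{upper bound} on $z(m,n,\mathcal{F})$, i.e.\ on how many edges a bipartite $\mathcal{F}$-free graph can have; the obstructions you need to exclude are sparse, bounded-degree odd structures, and a density upper bound cannot forbid a sparse subgraph. Nothing in the hypotheses prevents an $\mathcal{F}$-free graph of the right density from containing, say, a long induced odd generalized theta whose odd-cycle lengths avoid $k$, unless one first localizes where the odd cycles live --- which is exactly what the regularity/cluster-graph step in \cite{aksv} accomplishes and what your sketch replaces with an assertion. (ii) The cleaning step is also not routine in the sparse regime: the standard deletion to minimum degree $\Theta(n^{\alpha-1})$ can discard the (possibly few, possibly concentrated) vertices carrying all odd cycles, so ``robust non-bipartiteness'' of $G'$ does not follow from $G$ being far from bipartite without a quantitative argument tying the number of edges one must delete to bipartize $G$ to the number of edges lost in the cleaning. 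Until these two points are supplied, the proposal is a plausible program rather than a proof; the surgery in your third ingredient (splicing a short odd cycle into a flexible even-spectrum path) is fine in spirit but has nothing to splice until (i) and (ii) are established.
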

For a more precise description of what is meant by near-bipartite, we refer the reader to
\cite{aksv}.  Roughly speaking, it means that one can remove a negligible 
number of edges from an extremal $\mathcal{F} \cup \{ C_k \}$-free graph
to make it bipartite.  
One of the keys to the proof of the Allen-Keevash-Sudakov-Verstra\"{e}te Theorem
was their Lemma 4.1.  This lemma allows one to transfer the density of an $\mathcal{F}$-free graph
to the density of a reduced graph obtained by
applying Scott's Sparse Regularity Lemma \cite{scott}.  Using Lemma 4.1 of \cite{aksv}, 
one can prove a version of Theorem \ref{new ub} for any family of 
bipartite graphs that is known to be smooth.
 
In the next section we prove Theorem \ref{chi 3 ub} and Theorem \ref{new ub}.  
In Section \ref{section lb} we prove Theorem \ref{main theorem}.  In Section \ref{conclusion},
we highlight the connection between $\textup{ex}_{ \chi \leq 3 } (n , C_4 )$ 
and difference families from design theory.


\section{Proof of Theorem \ref{chi 3 ub}}\label{section ub}

In this section we prove Theorem \ref{chi 3 ub}.  The proof is based on the 
standard double counting argument of K\"{o}v\'{a}ri, S\'{o}s, and Tur\'{a}n \cite{KST}.

\bigskip

\begin{proof}[Proof of Theorem \ref{chi 3 ub}]
Let $G$ be an $n$-vertex 3-partite graph that is $K_{s,t}$-free.  Let $A_1$, $A_2$, and $A_3$ be the parts of $G$. Define 
$\delta_i$ by $\delta_i n = |A_i|$.

By the K\"{o}v\'{a}ri-S\'{o}s-Tur\'{a}n Theorem \cite{KST}, there is a constant $\beta_{s,t} > 0$ such that 
the number of edges with one end point in $A_1$ and the other in $A_2$ is at most 
$\beta_{s,t} n^{2-1/s}$.  If there are $o(n^{2-1/s})$ edges between $A_1$ and $A_2$, then we may remove these 
edges to obtain a bipartite graph $G'$ that is $K_{s,t}$-free which gives
\[
e(G) \leq e(G' ) -  o( n^{2-1/s}) \leq \textup{ex}_{ \chi \leq 2} ( n , K_{s,t}).
\]
In this case, we may apply the upper bound of  F\"{u}redi \cite{furedi z} (or Nikiforov \cite{nikiforov}) to 
see that the conclusion of Theorem \ref{chi 3 ub} holds.  
Therefore, we may assume that there is a positive constant $c_{1,2}$ so that the number of 
edges between $A_1$ and $A_2$ is $c_{1,2} n^{2-1/s}$.  Similarly, let $c_{1,3} n^{2-1/s}$ and 
$c_{2,3} n^{2-1/s}$ be the number of edges between $A_1$ and $A_3$, and between $A_2$ and 
$A_3$, respectively.  

For a positive real number $x$, define 
\[
\binom{x}{s}
=
\left\{
\begin{array}{ll}
\frac{  x (x-1) (x-2) \cdots (x- s +1)  }{s!} & \mbox{if $x \geq s-1$,} \\
0 & \mbox{otherwise.}
\end{array}
\right.
\]
The function $f(x) = \binom{x}{s}$ is then a convex function.  
Using the assumption that $G$ is $K_{s,t}$-free and Jensen's Inequality, we have  
\begin{eqnarray}
(t-1)\binom{ |A_1| }{s} & \geq & \sum_{v \in A_2} \binom{ d_{A_1} (v) }{s} + \sum_{v \in A_3} \binom{ d_{ A_1} (v) }{s} 
\label{special inequality} \\
& \geq & 
|A_2| \binom{ \frac{1}{ |A_2 |} e( A_1 , A_2) }{s} + |A_3| \binom{ \frac{1}{ |A_3 |} e( A_1 , A_3) }{s} \nonumber \\
 & \geq &
 \frac{\delta_2 n}{s!}  \left( \frac{ e(A_1 , A_2) }{ |A_2 | } - s \right)^s + 
 \frac{\delta_3 n}{s!}  \left( \frac{ e(A_1 , A_3) }{ |A_3 | } - s \right)^s. \nonumber
  \end{eqnarray} 
 After some simplification we get 
  \[
  (t-1) \frac{ ( \delta_1 n )^s }{ s!}  
  \geq 
\frac{ \delta_2 n }{s! } \left(  \frac{ c_{1,2} n^{2-1/s} }{ \delta_2 n } - s \right)^s
+ 
\frac{ \delta_3 n }{s! } \left(  \frac{ c_{1,3} n^{2-1/s} }{ \delta_3 n } - s \right)^s.
  \]
For $j \in \{2,3 \}$, we can assume that $\dfrac{ c_{1,j} n^{2 - 1/s }}{ \delta_j n } > s $ otherwise 
\[
e(A_1 , A_j) = c_{1,j} n^{2 - 1/s} \leq s \delta_j n \leq s n  = o ( n^{2 - 1/s}).
\]  
From the inequality $(1 + x)^s \geq 1 + sx$ for $x \geq -1$, we now have
\begin{eqnarray*}
(t - 1) \delta_1^s n^s 
& \geq & 
\delta_2 n 
\left(  \frac{ c_{1,2} n^{2 - 1/s} }{ \delta_2 n } \right)^s - 
\delta_2 n s^2 \left( \frac{ c_{1,2} n^{2 - 1/s} }{ \delta_2 n } \right)^{s - 1} \\
& + &
\delta_3 n 
\left(  \frac{ c_{1,3} n^{2 - 1/s} }{ \delta_3 n } \right)^s - 
\delta_3 n 
s^2 \left( \frac{ c_{1,3} n^{2 - 1/s} }{ \delta_3 n } \right)^{s - 1}.
\end{eqnarray*}

Multiplying through by $n^{- s} \delta_2^{s-1} \delta_3^{s-1}$ and rearranging gives
\[
(t - 1) \delta_1^s \delta_2^{s - 1} \delta_3^{s - 1} 
\geq 
c_{1,2}^s \delta_3^{ s - 1} + c_{1,3}^s \delta_2^{s - 1} 
- 
\frac{s^2 \delta_3^{s - 1}\delta_2 c_{1,2}^{s - 1}  }{ n^{1 - 1/s} } 
- 
\frac{s^2 \delta_2^{s - 1}\delta_3 c_{1,3}^{s - 1} }{ n^{1 - 1/s} }.
\]

Since $\delta_2$ and $\delta_3$ are both at most 1 and $c_{1,j}$ is at most $\beta_{s,t}$, these last two terms 
are $o(1)$ (as $n$ goes to infinity) and so 
\[
(t - 1) \delta_1^s \delta_2^{s - 1} \delta_3^{s - 1} 
\geq 
c_{1,2}^s \delta_3^{ s - 1} + c_{1,3}^s \delta_2^{s - 1} - o(1).
\]
By symmetry between the parts $A_1$, $A_2$, and $A_3$, 
\[
(t - 1) \delta_2^s \delta_1^{s - 1} \delta_3^{s - 1} 
\geq 
c_{1,2}^s \delta_3^{ s - 1} + c_{2,3}^s \delta_1^{s - 1} - o(1) 
\]
and
\[
(t - 1) \delta_3^s \delta_1^{s - 1} \delta_2^{s - 1} 
\geq 
c_{1,3}^s \delta_2^{ s - 1} + c_{2,3}^s \delta_1^{s - 1} - o(1).
\]
Add these three inequalities together and divide by 2 to obtain 
\[
\frac{t -1}{2} \delta_1^{s - 1} \delta_2^{s - 1} \delta_3^{s - 1} ( \delta_1 + \delta_2 + \delta_3) 
\geq 
c_{1,2}^s \delta_3^{s-1} + c_{1,3}^s \delta_2^{s-1} + c_{2,3}^s \delta_1^{s - 1} -
o(1).
\]
Now $n = |A_1| + |A_2| + |A_3| = ( \delta_1 + \delta_2 + \delta_3 ) n $ so we may replace 
$\delta_1 + \delta_2 + \delta_3 $ with 1.  
This leads us to the optimization problem of maximizing 
\[
c_{1,2} + c_{1,3} + c_{2,3}
\]
subject to the constraints 
\begin{center}
$0 \leq \delta_i$, ~~~~~~~~~~ $0 \leq c_{i,j} \leq 1$,~~~~~~~~~~ $\delta_1 + \delta_2 + \delta_3 = 1$, 
\end{center}
and 
\[
\frac{t-1}{2} \delta_1^{s-1} \delta_2^{s-1} \delta_3^{s-1} 
\geq 
\delta_3^{s-1} c_{1,2}^s + \delta_2^{s-1} c_{1,3}^s + \delta_1^{s-1} c_{2,3}^s.  
\]
This can be done using the method of Lagrange Multipliers (see the Appendix) and gives  
\[
c_{1,2} + c_{1,3} + c_{2,3} \leq \left( \frac{1}{3} \right)^{1 - 1/s} \left( \frac{ t- 1}{2} \right)^{1/s}.
\]
We conclude that the number of edges of $G$ is at most 
\[
\left( \frac{1}{3} \right)^{1 - 1/s} \left( \frac{ t- 1}{2} \right)^{1/s} n^{2 - 1/s} + o( n^{2 - 1/s} ).
\]
\end{proof}

\bigskip

Now we prove Theorem \ref{new ub}.  First we recall some definitions from graph regularity.
Let $0 < p \leq 1$.  If $X$ and $Y$ are a pair of disjoint non-empty subsets of vertices in a graph $G$,
define $d_p (X,Y) = \frac{1}{p} d(X,Y)$ where 
\[
d(X,Y) = \frac{ e(X,Y) }{ |X| |Y| }
\]
is the density between $X$ and $Y$.  The pair 
$(X,Y)$ is \emph{$( \epsilon , p)$-regular} if 
\[
| d_p (X' , Y') - d_p (X,Y) | \leq \epsilon
\]
for all $X' \subseteq X$, $Y' \subseteq Y$ with $|X'| \geq \epsilon |X|$ and $|Y'| \geq \epsilon |Y|$.  

Suppose $V(G) = V_0 \cup V_1 \cup \dots \cup V_k$ is a partition of the vertex set of a graph $G$.
This partition is \emph{$(\epsilon , p )$-regular} if $|V_0| \leq \epsilon n$, 
$|V_1| = \dots = |V_k|$, and all but at most $\epsilon k^2$ of the pairs
$(V_i , V_j)$ with $1 \leq i , j \leq k$ are $(\epsilon , p )$-regular.  

Given $0 \leq d \leq 1$, the \emph{$(\epsilon , d , p)$-cluster graph} associated to a given
$(\epsilon , p)$-regular partition is the graph with vertex set $\{V_1, \dots , V_k \}$ (the parts of the partition
excluding $V_0$), and $\{ V_i , V_j \}$ is an edge if and only if $(V_i,V_j)$ is an 
$( \epsilon , p )$-regular pair with $d_p (V_i , V_j) \geq d$.  We will reserve the letter $R$ 
for an $( \epsilon ,d,p)$-cluster graph.  

Finally, Scott's Sparse Regularity Lemma tells us that $( \epsilon , p)$-regular partitions exist for any graph $G$ and,
crucially, the number of parts does not depend on the number of vertices of $G$.  

\begin{theorem}[Scott's Sparse Regularity Lemma]
Let $\epsilon > 0$ and let $C \geq 1$ be a constant.  There is an integer $T$, depending only on $\epsilon$, such that if 
$G$ is any graph with $e(G) \leq C pn^2$, then $G$ has an $(\epsilon , p )$-regular partition
where the number of parts is between $\epsilon^{-1} $ and $T$.  
\end{theorem}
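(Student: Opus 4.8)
The plan is to run Szemer\'edi's energy-increment argument in the $p$-normalized (sparse) setting, adapting each step to the rescaled densities $d_p$; the hypothesis $e(G)\le Cpn^{2}$ enters in exactly one place, namely to keep the relevant energy bounded by a constant that does \emph{not} depend on $p$. \textbf{Setting up the index.} Fix a truncation level $K=K(C,\epsilon)$; taking $K=4C/\epsilon$ will do. For an equipartition $\mathcal P=\{V_{1},\dots ,V_{k}\}$ of $V(G)$ (together with a small exceptional class $V_{0}$) define the $p$-normalized mean-square density
\[
q(\mathcal P)=\frac{1}{n^{2}}\sum_{1\le i,j\le k}|V_{i}|\,|V_{j}|\,\bigl(\min\{\,d_{p}(V_{i},V_{j}),\,K\,\}\bigr)^{2}.
\]
Since every capped density lies in $[0,K]$, we get $0\le q(\mathcal P)\le K^{2}$, a bound depending only on $C$ and $\epsilon$; this is the single point at which sparse regularity departs from the dense case. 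The edge bound controls the pairs we truncate: expanding $\sum_{i,j}\frac{|V_{i}||V_{j}|}{n^{2}}d_{p}(V_{i},V_{j})=\frac{1}{pn^{2}}\sum_{i,j}e(V_{i},V_{j})\le \frac{2e(G)}{pn^{2}}\le 2C$ shows that the total weighted mass of ``heavy'' pairs, those with $d_{p}(V_{i},V_{j})>K$, is at most $2C/K=\epsilon/2$, which for an equipartition means at most $\tfrac{\epsilon}{2}k^{2}(1+o(1))$ such pairs. These will be set aside at the end, charged to the budget of $\epsilon k^{2}$ pairs allowed to be irregular, and never used to force a refinement.

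Next I would prove the two standard lemmas. \emph{Monotonicity under refinement}: if $\mathcal P'$ refines $\mathcal P$ then $q(\mathcal P')\ge q(\mathcal P)$; this is the usual cellwise convexity estimate, the only wrinkle being that $x\mapsto(\min\{x,K\})^{2}$ is not convex, so one checks the inequality directly on each cell, using that sub-cells of density exceeding $K$ only add to the refined contribution. \emph{Energy boost}: if $\mathcal P$ has at least $\epsilon^{-1}$ parts and is not $(\epsilon,p)$-regular, so that more than $\tfrac{\epsilon}{2}k^{2}$ irregular \emph{light} pairs remain after discarding the heavy ones, then each such pair $(V_{i},V_{j})$ admits a witness $X'\subseteq V_{i}$, $Y'\subseteq V_{j}$ with $|X'|\ge\epsilon|V_{i}|$, $|Y'|\ge\epsilon|V_{j}|$ and $|d_{p}(X',Y')-d_{p}(V_{i},V_{j})|>\epsilon$; the defect form of the Cauchy--Schwarz inequality shows that subdividing $V_{i}$ and $V_{j}$ along these witnesses raises the contribution of that cell to $q$ by at least $\epsilon^{4}\,|V_{i}||V_{j}|/n^{2}$. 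Carrying out all these subdivisions simultaneously (take in each $V_{i}$ the common refinement of the at most $k$ binary cuts coming from its irregular light pairs, giving at most $k4^{k}$ parts overall) yields a refinement $\mathcal P'$ with $q(\mathcal P')\ge q(\mathcal P)+\tfrac12\epsilon^{5}$, say.

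Then I would iterate. Starting from a trivial equipartition and applying the boost step while the current partition fails to be $(\epsilon,p)$-regular, the index grows by at least $\tfrac12\epsilon^{5}$ each time while remaining in $[0,K^{2}]$, so after at most $2K^{2}\epsilon^{-5}$ steps the process halts at an $(\epsilon,p)$-regular partition. Each step at most exponentiates the number of parts, so the final count is bounded by a tower-type function $T$ of $\epsilon$ and $C$; as $C$ is a fixed constant this is $T=T(\epsilon)$. A routine finish --- re-equalizing the parts after each refinement, collecting remainders (together with the earlier exceptional vertices) into $V_{0}$ so that $|V_{0}|\le\epsilon n$, and, if fewer than $\epsilon^{-1}$ parts remain, splitting them further --- turns this into a partition of the form demanded by the theorem, and none of these adjustments is sensitive to a bounded change in $\epsilon$.

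The main obstacle I expect is the energy-boost lemma in the presence of the truncation: when a light pair $(V_{i},V_{j})$ is cut into four sub-cells, some of those sub-cells may have density exceeding $K$ and get capped, and one must still verify the net change in $q$ is at least a fixed positive constant. The way around this is to take $K$ comfortably larger than the densities actually exploited --- only light pairs, of density at most $K$, ever drive a boost --- and to observe that capping a sub-cell can only cost against the already-small total mass of heavy pairs, so the loss is absorbable. Making this precise is the real content beyond the classical proof; everything else transcribes Szemer\'edi's argument line for line.
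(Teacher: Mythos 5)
The paper does not prove this statement; it is quoted verbatim from Scott's paper \cite{scott} and used as a black box in the proof of Theorem \ref{new ub}, so the comparison here is with Scott's actual argument rather than with anything in the text. Your overall strategy --- run the energy-increment iteration on the $p$-normalized densities and use $e(G)\le Cpn^2$ as an $L^1$ bound to keep the energy functional uniformly bounded --- is exactly the right spirit, and it is also where Scott's proof lives. But your specific choice of functional has a genuine gap that you flag but do not repair: the capped index built from $x\mapsto(\min\{x,K\})^2$ is \emph{not} monotone under refinement, and the failure is not confined to the ``already-small mass of heavy pairs.'' Concretely, take a light parent pair with $d_p(V_i,V_j)=K$ and split $V_j$ into two equal halves so that $d_p(V_i,V_j^1)=2K$ and $d_p(V_i,V_j^2)=0$. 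The parent contributes weight times $K^2$ to $q$, while the refined, capped contribution is weight times $\tfrac12 K^2$: the index \emph{drops} by half of its maximum possible value on that cell. Since such drops can occur on a constant fraction of the cells at every stage, they can swamp the $\tfrac12\epsilon^5$ gain from the irregular light pairs, and the iteration need not terminate. Your proposed remedy (``capping a sub-cell can only cost against the mass of heavy pairs'') confuses the $L^1$ mass of heavy sub-pairs, which is indeed $O(C)$, with the $L^2$-type energy loss, which carries an extra factor of $K$ and is not absorbable into the boost.

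The fix --- and the actual device in \cite{scott} --- is to replace the bounded-but-nonconvex cap by a \emph{convex} function with linear growth, e.g.\ $\psi(x)=x^2$ for $x\le 2$ and $\psi(x)=4x-4$ for $x>2$. Convexity restores monotonicity under refinement by Jensen exactly as in the dense case; boundedness of the total index no longer comes from boundedness of $\psi$ but from the linear growth together with $\sum_{i,j}\frac{|V_i||V_j|}{n^2}\,d_p(V_i,V_j)\le 2C$, giving $q(\mathcal P)=O(C)$ uniformly in $p$; and the defect Cauchy--Schwarz boost still applies on pairs whose witness densities lie in the quadratic regime, the few genuinely dense pairs being discarded into the $\epsilon k^2$ irregular budget via Markov, just as you propose. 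With that one substitution the rest of your outline (simultaneous refinement, $q$ increases by a fixed $\mathrm{poly}(\epsilon)$ per step, tower-type bound on the number of parts, re-equalization into $V_0$) goes through as written.
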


\begin{proof}[Proof of Theorem \ref{new ub}]
Let $s =2$ and $ t \geq 2$, or let $s = t = 3$.  Define $( \alpha , \rho , p )$ by 
\[
( \alpha , \rho , p ) 
= 
\left\{
\begin{array}{ll}
(3/2 , \sqrt{t-1} , n^{-1/2} ) & \mbox{if $s =2 , t \geq 2$}, \\
(5/3 , 1 , n^{-1/3} ) & \mbox{if $s=t=3$.}
\end{array}
\right.
\]
This is the notation used in \cite{aksv}.  These parameters are chosen because for these 
particular values of $s$ and $t$,
\begin{center}
$z(m,n,K_{s,t}) = \sqrt{t-1} m n^{1/2} + o( mn^{1/2})$
~~
and ~~
$z(m,n,K_{3,3}) = m n^{2/3} + o(mn^{2/3})$
\end{center}
for $n \geq m$.  Fix a (small) positive constant $\gamma$.  
By Lemma 4.1 of \cite{aksv}, 
there is an $\epsilon_0 > 0$ and $d_0$ such that for any $0 < \epsilon \leq \epsilon_0$
and $0 < d \leq d_0$ and $T$, there is an $n_0$ such that 
the following holds.  If $G$ is any $n$-vertex $K_{s,t}$-free graph 
with $n \geq n_0$, and $R$ is an $(\epsilon ,d , p )$-cluster graph with $p = n^{ \alpha -2}$ obtained
from applying Scott's Sparse Regularity Lemma, then $R$ has $t$ vertices with $\epsilon^{-1} \leq t \leq T$.
Additionally, if 
\[
e(G) = ( \mu^{\alpha -1} + \gamma ) \rho p \frac{n^2}{2}
\]
where $\mu > 0$, then   
\begin{equation}\label{edges in R}
 e(R) \geq \left( \mu - \gamma \right) \frac{t^2}{2}
 \end{equation}
(this is the transference of density 
 mentioned after Theorem \ref{aksv theorem} in the Introduction).  
If we assume that $G$ is $k$-partite, then $R$ is also $k$-partite.  
 The number of edges in a 
 $k$-partite graph with $t$ vertices is at most $\binom{k}{2} \left( \frac{t}{k} \right)^2$ so 
 \begin{equation}\label{edges in R 2}
 e(R) \leq \binom{k}{2} \left( \frac{t}{k} \right)^2.
 \end{equation}
 Combining (\ref{edges in R}) and (\ref{edges in R 2}) gives $\mu \leq 1 - \frac{1}{k} + \gamma$.
 This upper bound on $\mu$ implies
 \[
 e(G) \leq \left( \left( \gamma + 1 - \frac{1}{k} \right)^{ \alpha -1 } + \gamma \right) \rho p \frac{n^2}{2}.
 \]
 When $s = 2$ and $t \geq 2$, we get
 \[
 e(G) \leq \left( \left( 1 - \frac{1}{k} \right)^{1/2} + o(1) \right) \frac{ \sqrt{t-1} }{2} n^{3/2} .
 \]
 When $s = t =3 $, 
 \[
 e(G) \leq \left( \left( 1 - \frac{1}{k} \right)^{2/3} + o(1)) \right) \frac{n^{5/3}}{2}.
 \]
  \end{proof}


\section{Proof of Theorem \ref{main theorem}}\label{section lb}

In this section we construct a 3-partite $K_{2,2t+1}$-free graph with many edges.
The construction is inspired by F\"{u}redi's construction of dense $K_{2,t}$-free graphs \cite{F3}.  

Let $t \geq 1$ be an integer.  Let $q$ be a power of a prime chosen so that 
$t$ divides $q - 1$ and let $\theta$ be a generator of the 
multiplicative group $\mathbb{F}_{q^2}^* := \mathbb{F}_{q^2} \backslash \{ 0 \}$.  
Let $A \subset \mathbb{Z}_{q^2 - 1}$ be defined by 
\[
A = \{ a \in \mathbb{Z}_{q^2 - 1} : \theta^a - \theta \in \mathbb{F}_q \}
\]    
and note that $|A| = q$.  
The set $A$ is sometimes called a \emph{Bose-Chowla Sidon} set and such sets were 
constructed by Bose and Chowla \cite{bose-chowla}.  
Let $H$ be the subgroup of $\mathbb{Z}_{q^2 - 1}$ generated 
by $(\frac{q-1}{t} ) (q + 1)$.   
Thus,  
\[
H = \left\{ 0 , 
\left( \frac{q-1}{t} \right) (q + 1) , 
2 \left(\frac{q-1}{t} \right) (q + 1) , \dots , 
(t-1) \left( \frac{q-1}{t} \right) (q + 1)
 \right\}.
\]
Note that $H$ is contained in the subgroup of $\mathbb{Z}_{q^2 - 1}$
generated by $q + 1$.  
Let $G_{q,t}$ be the bipartite graph whose parts are $X$ and $Y$ where each of $X$ and $Y$ is 
a disjoint copy of the quotient group $\mathbb{Z}_{q^2 - 1} / H$.    
A vertex $x + H \in X$ is adjacent to $x + a + H \in Y$ for all $a \in A$.

We will need the following lemma, which was proved in \cite{tt}.

\begin{lemma}\label{disjoint lemma}[Lemma 2.2 of \cite{tt}]
Let $A\subset \mathbb{Z}_{q^2-1}$ be a Bose-Chowla Sidon set. Then 
\[
A-A = \mathbb{Z}_{q^2-1} \setminus \{q+1, 2(q+1), 3(q+1), \ldots, (q-2)(q+1)\}.
\]
\end{lemma}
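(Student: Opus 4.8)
The plan is to establish both inclusions by a cardinality argument, using only the Sidon property of $A$ together with the multiplicative structure of $\mathbb{F}_{q^2}$. First I would record the two basic facts about the Bose--Chowla set. One has $|A| = q$: the map sending $c \in \mathbb{F}_q$ to the unique $a \in \mathbb{Z}_{q^2-1}$ with $\theta^a = \theta + c$ is a bijection onto $A$, well-defined because $\theta + c \neq 0$ (as $\theta \notin \mathbb{F}_q$). And $A$ is Sidon: if $a_1 + a_4 = a_2 + a_3$ with all $a_i \in A$, then expanding $(\theta + c_1)(\theta + c_4) = (\theta + c_2)(\theta + c_3)$ in the $\mathbb{F}_q$-basis $\{1, \theta\}$ of $\mathbb{F}_{q^2}$ forces $\{c_1, c_4\} = \{c_2, c_3\}$ and hence $\{a_1, a_4\} = \{a_2, a_3\}$. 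Consequently the $q(q-1)$ ordered differences $a - a'$ with $a \neq a'$ are pairwise distinct and nonzero, so $|A - A| = q^2 - q + 1$.

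Next I would observe that the target set on the right-hand side has the same size. Since $(q+1)(q-1) = q^2 - 1$, the element $q + 1$ has order $q - 1$ in $\mathbb{Z}_{q^2-1}$, so $q+1, 2(q+1), \dots, (q-2)(q+1)$ are $q - 2$ distinct nonzero elements; thus $\mathbb{Z}_{q^2-1} \setminus \{q+1, 2(q+1), \dots, (q-2)(q+1)\}$ has exactly $(q^2 - 1) - (q - 2) = q^2 - q + 1$ elements, matching $|A-A|$. Hence it suffices to prove the single inclusion that no $j(q+1)$ with $1 \leq j \leq q - 2$ is a difference of two elements of $A$; equality of the two sets then follows automatically from equality of their sizes.

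To prove this last claim, suppose for contradiction that $j(q+1) = a - a'$ for some $a, a' \in A$ with $1 \leq j \leq q - 2$, so that $\theta^a = \theta^{j(q+1)} \theta^{a'}$. The crux is that $\theta^{q+1}$ has order $(q^2-1)/(q+1) = q - 1$ and therefore generates the multiplicative group $\mathbb{F}_q^*$ of the subfield; in particular $\lambda := \theta^{j(q+1)}$ lies in $\mathbb{F}_q^*$, and $\lambda \neq 1$ because $q - 1 \nmid j$. Writing $\theta^a = \theta + c$ and $\theta^{a'} = \theta + c'$ with $c, c' \in \mathbb{F}_q$, the identity $\theta + c = \lambda(\theta + c') = \lambda \theta + \lambda c'$ together with the linear independence of $1$ and $\theta$ over $\mathbb{F}_q$ forces $\lambda = 1$, a contradiction. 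I do not expect a serious obstacle here: the argument is short, and the only genuine idea is to recognize that shifting the exponent by a multiple of $q + 1$ amounts to multiplying by an element of $\mathbb{F}_q^*$, which is incompatible with the ``affine'' description $\theta^a = \theta + c$ of $A$; the remaining work is bookkeeping with the Sidon property and the two cardinalities.
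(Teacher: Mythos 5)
Your argument is correct and self-contained. The paper itself does not prove this lemma; it merely cites it as Lemma 2.2 of Tait--Timmons, \emph{Sidon sets and graphs without 4-cycles}, so there is no in-paper proof to compare against. Your approach --- establishing $|A-A| = q^2 - q + 1$ from the Sidon property, noting that the claimed complement has the same cardinality, and then showing the single inclusion that no nonzero multiple $j(q+1)$ ($1 \le j \le q-2$) is a difference of two elements of $A$ --- is the standard route and matches the argument in the cited reference in spirit. The key step, that $\theta^{j(q+1)} \in \mathbb{F}_q^* \setminus \{1\}$ while $(\theta+c) = \lambda(\theta+c')$ forces $\lambda = 1$ by comparing $\theta$-coefficients in the $\mathbb{F}_q$-basis $\{1,\theta\}$, is exactly right, and the cardinality bookkeeping (that $q+1$ has order $q-1$ in $\mathbb{Z}_{q^2-1}$, so exactly $q-2$ nonzero multiples are excluded) closes the argument cleanly.
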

In particular, Lemma \ref{disjoint lemma} implies that $(A-A) \cap H = \emptyset$.

\begin{lemma}\label{new lemma 1}
If $t \geq 1$ is an integer and $q$ is a power of a prime for which 
$t$ divides $q - 1$, then the 
graph $G_{q,t}$ is a bipartite graph with $\frac{q^2 - 1}{t}$ vertices in 
each part, is $K_{2,t+1}$-free, and has $q \left( \frac{q^2 - 1}{t} \right)$ edges.
\end{lemma}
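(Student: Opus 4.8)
The plan is to verify the three claims about $G_{q,t}$ in turn: the vertex count, the edge count, and the $K_{2,t+1}$-freeness.

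First I would handle the structural bookkeeping. Since $t \mid q-1$, the element $\left(\frac{q-1}{t}\right)(q+1)$ of $\mathbb{Z}_{q^2-1}$ has order exactly $t$ (its $t$-th multiple is $(q-1)(q+1) = q^2 - 1 \equiv 0$, and no smaller positive multiple vanishes), so $|H| = t$ and hence $|\mathbb{Z}_{q^2-1}/H| = \frac{q^2-1}{t}$. This gives the claimed number of vertices in each of $X$ and $Y$. For the edge count, note that the adjacency rule ``$x+H \sim x+a+H$ for all $a \in A$'' is well-defined on cosets: if $x + H = x' + H$ then $x + a + H = x' + a + H$. I would check that each fixed vertex $x + H \in X$ has exactly $q$ neighbors, i.e.\ that the map $a \mapsto x + a + H$ is injective on $A$; this is precisely the statement that $a - a' \notin H$ for distinct $a, a' \in A$, which is the remark $(A - A) \cap H = \emptyset$ immediately following Lemma \ref{disjoint lemma}. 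Hence $\deg(x+H) = |A| = q$ for every $x + H \in X$, and the total edge count is $q \cdot \frac{q^2-1}{t}$.

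The main obstacle is the $K_{2,t+1}$-freeness, which is where the Sidon property does the real work. Suppose for contradiction that two distinct vertices $x_1 + H, x_2 + H \in X$ have $t+1$ common neighbors $y_1 + H, \dots, y_{t+1} + H$ in $Y$. For each $i$ there are $a_i, b_i \in A$ with $y_i + H = x_1 + a_i + H = x_2 + b_i + H$, so $a_i - b_i \equiv (x_2 - x_1) \pmod H$, i.e.\ $a_i - b_i$ lies in a fixed coset $c + H$ of $H$ where $c := x_2 - x_1 \not\equiv 0$. Since $|H| = t$, by pigeonhole two of the $t+1$ differences coincide exactly: $a_i - b_i = a_j - b_j$ in $\mathbb{Z}_{q^2-1}$ for some $i \neq j$, hence $a_i - a_j = b_i - b_j$. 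The Sidon property of $A$ (each nonzero difference has a unique representation) then forces $\{a_i, b_i\} = \{a_j, b_j\}$; combined with $a_i - b_i = a_j - b_j$ this gives $a_i = a_j$ and $b_i = b_j$, so $y_i + H = y_j + H$, contradicting distinctness of the common neighbors. I should also rule out the other orientation of $K_{2,t+1}$ (two vertices in $Y$ with $t+1$ common neighbors in $X$), but this is symmetric since the edge relation $x + H \sim y + H$ iff $y - x \in A + H$ is symmetric in form: two vertices $y_1 + H \ne y_2 + H$ with common neighbors $x_1 + H, \dots, x_{t+1} + H$ yield $a_i - b_i \equiv y_2 - y_1 \pmod H$ with $a_i, b_i \in A$, and the identical argument applies. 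A small point to confirm along the way is that $A$, being a Sidon set in $\mathbb{Z}_{q^2-1}$, genuinely has the unique-difference property used here; this is part of the Bose--Chowla construction cited from \cite{bose-chowla}. Assembling these three parts completes the proof of Lemma \ref{new lemma 1}.
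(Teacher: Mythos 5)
Your proposal is correct and follows essentially the same route as the paper: vertex count via $|H| = t$, edge count via $(A-A) \cap H = \{0\}$ (from Lemma~\ref{disjoint lemma}) giving degree exactly $q$, and $K_{2,t+1}$-freeness by showing each nonzero coset $c+H$ supports at most $t$ common-neighbor witnesses because the Sidon property allows at most one pair $(a,b)$ per value of the difference. The paper phrases the last step as a direct count ($t$ choices of $h_3$, each contributing at most one ordered pair), whereas you phrase it as pigeonhole-plus-contradiction over $t+1$ alleged common neighbors; these are the same argument in different clothing. One small inaccuracy in your write-up: from $a_i - a_j = b_i - b_j$, Sidon gives $\{a_i, b_j\} = \{a_j, b_i\}$, not $\{a_i, b_i\} = \{a_j, b_j\}$ as you wrote. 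The two resulting cases are $a_i = a_j, b_i = b_j$ (forcing $y_i + H = y_j + H$, the desired contradiction) and $a_i = b_i, a_j = b_j$ (forcing $a_i - b_i = 0$, hence $x_1 + H = x_2 + H$, again a contradiction), so your conclusion still holds, but the intermediate Sidon claim should be restated.
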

\begin{proof}
It is clear that $G_{q,t}$ is bipartite and has $\frac{q^2 - 1}{t}$ vertices in each 
part.  Let $x +H$ be a vertex in $X$.  The neighbors of $x+H$ are 
of the form $x + a + H$ where $a \in A$.  We now show that these vertices are all distinct.
If $x + a + H = x + b + H$ for some $a,b \in H$, then $a- b \in H$.  By Lemma \ref{disjoint lemma}
\[
(A - A) \cap H = \{ 0 \}
\]
where $A - A = \{ a - b : a , b \in A \}$.  
We conclude that $a = b$ and so the degree of $x+H$ is $|A| = q$.  
This also implies that $G_{q,t}$ has $q  \left( \frac{q^2 - 1}{t} \right)$ edges.  To finish the proof, 
we must show that $G_{q,t}$ has no $K_{2,t+1}$.

We consider two cases depending on which part contains the part of size two  
of the $K_{2,t+1}$.  First suppose that $x + H$ and $y+H$ are distinct vertices in $X$ and 
let $z +H$ be a common neighbor in $Y$.  Then $z + H = x + a + H$ and 
$z+H = y + b + H$ for some $a,b \in A$.  Therefore, 
$z = x + a + h_1$ and $z = y + b + h_2$ for some $h_1, h_2 \in H$.  
From this pair of equations we get $a - b= y - x + h_2 - h_1$.  
Since $H$ is a subgroup, $h_2 - h_1 = h_3$ for some $h_3 \in H$ and we have 
\begin{equation}\label{eq 1.1}
a- b = y - x + h_3.
\end{equation}  
The right hand side of (\ref{eq 1.1}) is not zero since $x+H$ and $y+H$ 
are distinct vertices in $A$.  
Because $A$ is a Sidon set
and $y - x + h_3 \neq 0$, there is at most one ordered pair $(a,b) \in A^2$ for 
which $a-b = y - x + h_3$.  There are $t$ possibilities for $h_3$ and so 
$t$ possible ordered pairs $(a , b) \in A^2$ for which 
\[
z+H = x+a + H =y+b+H
\]
is a common neighbor of $x+H$ and $y+H$.  This shows 
that $x+H$ and $y+H$ have at most $t$ common neighbors.

Now suppose $x +H$ and $y + H$ are distinct vertices in $Y$, and $z +H$ is a 
common neighbor in $X$.  There are elements $a,b \in A$ such that 
$z + a +H = x + H$ and $z + b + H = y + H$.  Thus, 
$z +a  + h_1 = x$ and $z + b + h_2 = y$ for 
some $h_1 , h_2 \in H$.   Therefore, 
$x - a - h_1 = y - b - h_2$ so $a - b = x - y + h_2 - h_1$.  We can then argue as before that 
there are at most $t$ ordered pairs $(a,b) \in A^2$ such that 
$z+H$ is a common neighbor of $z+a+H = x+H$ and 
$z+b+H = y + H$.  
 \end{proof} 

\bigskip

Once again, let $t \geq 1$ be an integer and let $q$ be a power of a prime for 
which $t$ divides $q - 1$.  
Let $\Gamma_{q,t}$ be the 3-partite graph with parts $X$, $Y$, and $Z$ where each 
part is a copy of the quotient group $\mathbb{Z}_{q^2 - 1} / H$.
Here $H$ is the subgroup generated by $( \frac{q-1}{t} ) (q+ 1)$.   
A vertex $x + H \in X$ is adjacent to $x + a + H \in Y$ for all $a \in A$.
Similarly, a vertex $y+ H \in Y$ is adjacent to $y + a + H \in Z$ for all $a \in A$, and 
a vertex $z + H \in Z$ is adjacent to $z + a + H \in X$ for all $a \in A$.

\begin{lemma}\label{new lemma 2}
The graph $\Gamma_{q,t}$ is $K_{2, 2t+1}$-free.
\end{lemma}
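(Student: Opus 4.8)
The plan is to prove the stronger statement that any two distinct vertices of $\Gamma_{q,t}$ have at most $2t$ common neighbours. This suffices: a subgraph isomorphic to $K_{2,2t+1}$ would supply two vertices (the side of size two) with at least $2t+1$ common neighbours. I would use three facts already available: $|H|=t$; that distinct elements of $A$ lie in distinct cosets of $H$, which is exactly the statement $(A-A)\cap H=\{0\}$ recorded after Lemma~\ref{disjoint lemma}; and that $A$ is a Sidon set in the two equivalent forms ``for $m\neq 0$ there is at most one ordered pair $(a,b)\in A^2$ with $a-b=m$'' and ``for every $m$ there are at most two ordered pairs $(a,b)\in A^2$ with $a+b=m$''. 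Reading off the three adjacency rules, the $Y$-neighbours of $x+H\in X$ are $\{x+a+H:a\in A\}$ and its $Z$-neighbours are $\{x-a+H:a\in A\}$, and cyclically for $Y$ and $Z$; by $(A-A)\cap H=\{0\}$ each of these sets has exactly $q$ elements, so every vertex of $\Gamma_{q,t}$ determines, for each of its neighbours, a unique $a\in A$ witnessing that edge.

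First I would treat two vertices in the same part, say $x+H,y+H\in X$ with $x-y\notin H$. A common neighbour in $Y$ has the form $x+a+H=y+b+H$, forcing $a-b\equiv y-x\pmod H$; for each of the $t$ elements $h\in H$ the equation $a-b=(y-x)+h$ has at most one solution in $A^2$, since $(y-x)+h\neq 0$ and $A$ is Sidon, and distinct common neighbours yield distinct pairs $(a,b)$ by the uniqueness noted above. Hence there are at most $t$ common neighbours in $Y$. The identical argument applied to the $Z$--$X$ rule shows a common neighbour in $Z$ forces $b-a\equiv y-x\pmod H$, again at most $t$ of them. Since $Y$ and $Z$ are disjoint, two vertices in a common part have at most $2t$ common neighbours.

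Next I would treat two vertices in different parts; by the cyclic symmetry of the construction I may assume $x+H\in X$ and $y+H\in Y$. Since each part is independent, every common neighbour lies in $Z$, and such a vertex $z+H$ satisfies $z+H=x-a+H$ (the $Z$--$X$ rule) and $z+H=y+b+H$ (the $Y$--$Z$ rule) for $a,b\in A$ that are uniquely determined by $z+H$. These equations force $a+b\equiv x-y\pmod H$, so for each of the $t$ elements $h\in H$ the pair $(a,b)$ satisfies $a+b=(x-y)+h$, and the Sidon property permits at most two ordered pairs for each such value. Distinct common neighbours give distinct pairs, so there are at most $2t$ common neighbours in this case as well.

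Combining the two cases, no two vertices of $\Gamma_{q,t}$ have $2t+1$ common neighbours, so $\Gamma_{q,t}$ is $K_{2,2t+1}$-free. The one genuinely new ingredient compared with Lemma~\ref{new lemma 1} is the case of two vertices in different parts: there one must replace the ``difference'' form of the Sidon property by the ``sum'' form, which costs a factor of $2$ (this is what turns the forbidden $K_{2,t+1}$ into $K_{2,2t+1}$), and one must invoke $(A-A)\cap H=\{0\}$ carefully so that the passage from a common neighbour to the witnessing pair $(a,b)$ is injective rather than merely a relation.
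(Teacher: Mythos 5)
Your proof is correct and follows essentially the same route as the paper: the same-part case reuses Lemma~\ref{new lemma 1} (difference form of the Sidon property plus $(A-A)\cap H=\{0\}$, giving at most $t$ common neighbours per opposite part), and the different-parts case reduces to $a+b\equiv x-y \pmod{H}$, where the sum form of the Sidon property gives at most two ordered pairs per coset representative, hence at most $2t$ common neighbours. The extra care you take to make the map from common neighbours to witnessing pairs $(a,b)$ explicitly injective is implicit in the paper but is a welcome clarification.
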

\begin{proof}
By Lemma \ref{new lemma 1}, a pair of vertices in one part of $\Gamma_{q,t}$ have 
at most $t$ common neighbors in each of the other two parts.  Thus, 
there cannot be a $K_{2,2t+1}$ in $\Gamma_{q,t}$ where the part of size two is contained 
in one part.  

Now let $x+H$ and $y +H$ be vertices in two different parts.
Without loss of generality, assume $x+H \in X$ and $y+H \in Y$.  
Suppose $z+H \in Z$ is a common neighbor of $x+H$ and $y+H$.  
There are elements $a,b \in A$ such that 
$z +  H = y + a +H$ and $z + b + H = x + H$, so we have 
\begin{center}
$z = y + a + h_1$ ~~ and ~~ $z + b = x + h_2$
\end{center}
for some $h_1 , h_2 \in H$.  This pair of equations implies
\[
a + b = x - y + h_2 - h_1.
\]
Since $H$ is a subgroup, $h_2 - h_1 \in H$.  Let $h_2  -h_1 = h_3$ where $h_3 \in H$
so 
\[
a+b = x - y + h_3.
\]
There are $t$ possibilities for $h_3$.  Given $h_3$, the 
equation $a + b = x - y + h_3$ uniquely determines the pair $\{ a , b \}$ since $A$ is a Sidon set.
There are two ways to order $a$ and $b$ and so 
$x+H$ and $y+H$ have at most $2t$ common neighbors in $Z$. 
 \end{proof} 

\bigskip

\begin{proof}[Proof of Theorem \ref{main theorem}]
By Theorem \ref{chi 3 ub}, 
\[
\textup{ex}_{ \chi \leq 3 } ( n , K_{2 , 2t+1} ) = \sqrt{ \frac{1}{3} } \left(   \frac{ 2t + 1 - 1 }{2} + o(1)) \right)^{1/2} n^{3/2}
= \sqrt{ \frac{t}{3}} n^{3/2} + o(n^{3/2}).
\]
As for the lower bound, if $q$ is any power of a prime for which $t$ divides $q - 1$, then by Lemmas \ref{new lemma 1} 
and \ref{new lemma 2}, the graph 
$\Gamma_{q,t}$ is a 3-partite graph with $\frac{q^2 - 1}{t}$ vertices in each part, is $K_{2,2t+1}$-free, 
and has $3 q  \left( \frac{q^2 - 1}{t} \right)$ edges.  Thus,
\[
\textup{ex}_{ \chi \leq 3} \left(  \frac{3(q^2 - 1)}{t} , K_{2 , 2t+1} \right) \geq 3q  \left( \frac{q^2 - 1}{t} \right).
\]
If $n = \frac{ 3 (q^2 - 1) }{t}$, then the above can be rewritten as 
\[
\textup{ex}_{ \chi \leq 3} ( n , K_{2 , 2t+1}) \geq n \left( \sqrt{ \frac{nt}{ 3} +1}  \right) \geq \sqrt{ \frac{t}{3} } n^{3/2} - n.
\]
A standard density of primes argument finishes the proof.  
\end{proof}


\section{Concluding Remarks}\label{conclusion}
We may consider a similar graph to $G_{q,t}$ and $\Gamma_{q,t}$ which does not necessarily have bounded chromatic number. Let $\Gamma$ be a finite abelian group with a subgroup $H$ of order $t$. Let $A\subset \Gamma$ be a Sidon set such that $(A-A) \cap H = \{0\}$. Then we may construct a graph $G$ with vertex set $\Gamma / H$ where $x+H$
is adjacent to $y + H$ if and only if $x+y = a+h$ for some $a\in A$ and $h\in H$. 
The proof of Lemma \ref{new lemma 1} shows that $G$ is a $K_{2,t+1}$-free graph on $|\Gamma|/|H|$ vertices and every vertex has degree $|A|$ or $|A|-1$.

When $\Gamma = \mathbb{Z}_{q^2-1}$, $t$ divides $q-1$, and $A$ is a Bose-Chowla Sidon set, the resulting graph $G$ is similar to the one constructed by F\"uredi in \cite{F3}.  In general, these graphs may or may not be isomorphic and some 
computational results suggest these graphs are isomorphic when $q \equiv 1 ( \textup{mod}~4)$.  
For example, when $q=19$ and $t\in \{1,2,3,6\}$ the graph constructed above has one more edge than the graph constructed by F\"uredi.
However, when $q = 17$ and $t \in \{1,2, 4 \}$, the graphs are isomorphic.  

Turning to the question of determining $\mathrm{ex}_{\chi\leq 3}(n, C_4)$, Theorem \ref{chi 3 ub} shows that 
\[
\mathrm{ex}_{\chi\leq 3}(n, C_4) \lesssim \frac{n^{3/2}}{\sqrt{6}}. 
\]
Furthermore, the optimization shows that if this bound is tight asymptotically, then a construction would have to be $3$-partite with each part of size asymptotic to $\frac{n}{3}$, and average degree asymptotic to $\sqrt{\frac{n}{6}}$ between each part. The following construction is due to Jason Williford \cite{W}.

\begin{theorem}
Let $R$ be a finite ring, $A\subset R$ an additive Sidon set and 
\[
B = cA = \{c a:a\in A\}.
\] 
If $(A-A) \cap (B-B) = \{0\}$ where $c$ is invertible, then 
there is a graph on $3|R|$ vertices which is $3$-partite, $C_4$-free and is $|A|$-regular between parts.
\end{theorem}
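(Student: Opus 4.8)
The plan is to realise the graph as a ``Cayley-sum''-type construction on three copies of $R$, carrying a multiplicative twist by $c$ on one of the three edge classes. Take disjoint copies $X,Y,Z$ of the additive group of $R$ and join $x\in X$ to $y\in Y$ precisely when $x+y\in A$, join $y\in Y$ to $z\in Z$ precisely when $y+z\in B$, and join $z\in Z$ to $x\in X$ precisely when $z+cx\in A$. This graph has $3|R|$ vertices and is plainly $3$-partite. Regularity between parts is immediate: for fixed $x$ the neighbours of $x$ in $Y$ are $\{a-x:a\in A\}$ and those in $Z$ are $\{a-cx:a\in A\}$, each of size $|A|$ since translation and $x\mapsto cx$ are injective; the analogous statements for vertices of $Y$ and of $Z$ use the same injectivity together with $|B|=|A|$, which holds because $c$ is invertible. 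So the entire content is $C_4$-freeness.

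Next I would reduce $C_4$-freeness to a statement about common neighbours. In a $3$-partite graph the two cycle-nonadjacent vertices of any $C_4$ must lie in a common part, since two vertices from different parts that are adjacent along the $4$-cycle would be an edge inside a part. Hence it suffices to show that every pair of vertices lying in a common part has at most one common neighbour. I would prove this by establishing, for each part $P$ and each pair $p\neq p'$ in $P$: (i) $p$ and $p'$ have at most one common neighbour in each single other part; and (ii) they cannot have a common neighbour in each of the two other parts at once. This also disposes of the configuration of two vertices in distinct parts sharing two common neighbours in the third part: that is exactly instance (ii) with $P$ taken to be the part containing those two common neighbours.

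For (i) I would use that $A$, and hence also $B=cA$, is a Sidon set (left multiplication by the unit $c$ is injective and commutes with taking differences). A common neighbour of $p$ and $p'$ along one edge class lets one cancel the shared vertex, exhibiting a nonzero element --- either $p-p'$ or, on the twisted $Z$--$X$ class when $p,p'\in X$, the element $c(p-p')$, which is nonzero since $c$ is invertible --- as a difference of two elements of $A$, or of two elements of $B$; the Sidon property of the relevant set makes that representation unique, so the common neighbour is unique. For (ii), a common neighbour of $p,p'$ in one of the other parts forces $p-p'$ into $A-A$ (possibly after multiplying by $c$), a common neighbour in the remaining part forces it into $B-B=c(A-A)$, and since $(A-A)\cap(B-B)=\{0\}$ and $c$ is invertible this gives $p=p'$, a contradiction. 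Equivalently one can argue (ii) through the identity it produces: a common neighbour $z\in Z$ of $x\in X$ and $y\in Y$ satisfies $a-b=cx-y$ for some $a\in A$ and $b\in B$, and any two solutions $(a_1,b_1),(a_2,b_2)$ obey $a_1-a_2=b_1-b_2\in(A-A)\cap(B-B)=\{0\}$.

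The real obstacle, I expect, is finding the construction rather than checking it. With three edge classes but effectively only the two ``colours'' $A$ and $B$ available, a fully symmetric sum-construction fails: the part lying between the two edge classes of the same colour would only ever see the constraint $p-p'\in A-A$ (or $p-p'\in B-B$) twice, which does not force $p=p'$. The twist $z+cx\in A$ on the $Z$--$X$ class is what repairs this, turning one occurrence of $A-A$ into $c(A-A)=B-B$ so that each part sees $A-A$ once and $B-B$ once and the intersection hypothesis applies; this is also exactly where the hypotheses ``$B=cA$'' (not merely ``$B$ a Sidon set with $(A-A)\cap(B-B)=\{0\}$'') and ``$c$ invertible'' are genuinely used. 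Once the construction is fixed, what remains is a short, if slightly fiddly, finite check over the three choices of part and the three mixed pairs of parts.
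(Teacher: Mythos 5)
Your proposal is correct and follows essentially the same approach as the paper: a 3-partite sum-graph on three copies of $R$ with a multiplicative twist by $c$ on one of the three edge classes, with $C_4$-freeness split into the Sidon case (two common neighbours within a single other part) and the intersection case (one common neighbour in each of the two other parts, where $(A-A)\cap(B-B)=\{0\}$ is invoked after clearing the unit $c$). Your systematic reduction via (i) and (ii) is in fact slightly more complete than the paper's write-up, which treats the mixed configurations with two vertices in $S_2$ (or in $S_3$) but leaves implicit the easy configuration with two vertices in $S_1$ and one in each of $S_2$, $S_3$.
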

\begin{proof}
We construct a graph with partite sets $S_1, S_2, S_3$ where $S_1 = R$, $S_2 = \{A+i\}_{i\in R}$ and $S_3 = \{B+j\}_{j\in R}$. A vertex in $S_1$ is adjacent to a vertex in $S_2$ or $S_3$ by inclusion. 
The vertex $A+j\in S_2$ is adjacent to $B+i \in S_3$ if $-cj + i\in A$. Since $c$ is invertible, we have that both $A$ and $B$ are Sidon sets.
Therefore, the bipartite graphs between $S_1$ and $S_2$, and between $S_1$ and $S_3$ are incidence graphs of partial linear spaces, and thus do not contain $C_4$. 

If there were a $C_4$ with $A+i, A+j\in S_2$ and $B+k, B+l \in S_3$, it implies that there exist $a_1,a_2,a_3,a_4\in A$ such that
\begin{align*}
-ci + k & = a_1\\
-ci+l & = a_2\\
-cj + k & = a_3\\
-cj + l &=a_4.
\end{align*}
This means that $k-l = a_1-a_2 = a_3-a_4$. Since $A$ is a Sidon set this means that $a_1 = a_2$ or $a_1 = a_3$, which implies that either $k=l$ or $i = j$.

If there were a $C_4$ with $i \in S_1$, $A+j, A+k\in S_2$, and $B+l\in S_3$, then there are $a_1,a_2,a_3,a_4\in A$ such that 
\begin{align*}
i&=a_1+j\\
i&=a_2+k\\
-cj+l &=a_3\\
-ck+l &=a_4.
\end{align*}
Thus, $c(j-k) = c(a_2-a_1) = a_4-a_3$. Since $B=cA$ we have that $b_2-b_1 = a_4-a_3$ for some $b_1,b_2\in B$, and therefore $b_2-b_1 = a_4-a_3 = 0$. This implies that $j=k$. The case when there are two vertices in $S_3$ and one each in $S_1$ and $S_2$ is similar.
\end{proof}
\medskip

The condition that $(A-A) \cap (B-B) = \{0\}$ and $A$ is a Sidon set implies that $2|A|(|A|-1) \leq |R| -1$. In $\mathbb{Z}_5$, 
if $A = \{0,1\}$ and $B = 2A = \{0,2\}$, we have $(A-A) \cap (B-B) = \{0\}$ and $(A-A)\cup (B-B) = \mathbb{Z}_5$. This gives a $3$-partite graph on $15$ vertices which is $C_4$-free and is $4$-regular. In $\mathbb{Z}_{41}$, the set $A = \{1,10,16,18,37\}$ and $B = 9A$ have the same property that $(A-A) \cap (B-B) = \{0\}$ and $(A-A) \cup (B-B) = \mathbb{Z}_{41}$. This gives a $3$-partite $C_4$-free graph on $123$ vertices which is $10$ regular.
These two lower bounds, together with inequality (\ref{special inequality}) from the proof of Theorem \ref{chi 3 ub} show
that 
\begin{center}
$\textup{ex}_{\chi \leq 3} (15 , C_4) =30$ ~~~ and ~~~ $\textup{ex}_{\chi \leq 3} ( 123 , C_4) = 615$.
\end{center}

In general, a $(v,k,\lambda)$-difference family in a group $\Gamma$ of order $v$ is a collection of sets $\{D_1,\ldots, D_t\}$,
 each of size $k$, such that the multiset 
\[
(D_1-D_1) \cup \cdots \cup (D_t - D_t)
\]
contains every nonzero element of $\Gamma$ exactly $\lambda$ times. If one could find an infinite family of $(2k^2-2k+1, k, 1)$-difference families in $\mathbb{Z}_{2k^2-2k+1}$ where the two blocks are multiplicative translates of each other by a unit, then the resulting graph would match the upper bound in Theorem \ref{chi 3 ub}. The sets $A=\{0,1\}$ and $2A$ in $\mathbb{Z}_5$, and $A=\{1,10,16,18,37\}$ and $9A$ in $\mathbb{Z}_{41}$ are examples of this for $k=2$ and $k=5$, respectively. We could not figure out how to extend this construction in general.  In \cite{cz} it is shown that no $(61, 6, 1)$-difference family exists in $\mathbb{F}_{61}$.

To show Theorem \ref{chi 3 ub} is tight asymptotically it would suffice to find something weaker than a $(2k^2-2k+1, k, 1)$-difference family where the two blocks are multiplicative translates of each other. We do not need every nonzero element of the group to be represented as a difference of two elements, just a proportion of them tending to $1$.

\section{Acknowledgements}

The authors would like to thank Casey Tompkins for introducing the first author to the problem.  
We would also like to thank Cory Palmer for helpful discussions.



\section{Appendix}\label{appendix}

Here we solve the optimization problem of Theorem \ref{chi 3 ub} using the method of 
Lagrange Multipliers.  For convenience, we write $x$ for $c_{1,2}$, $y$ for $c_{1,3}$, and 
$z$ for $c_{2,3}$.  Recall that $\delta_1$, $\delta_2$, and $\delta_3$ are positive real numbers
that satisfy $\delta_1 + \delta_2 + \delta_3 =1$.  
Let 
\[
f(x , y , z) = x + y + z
\]
and 
\[
g(x , y , z) = \frac{t-1}{2} \delta_1^{s-1} \delta_2^{s-1} \delta_3^{s-1} 
- \delta_3^{s-1} x^s - \delta_2^{s-1} y^s - \delta_1^{s-1} z^s.
\]
For a parameter $\lambda$, let $L (x,y,z, \lambda ) = f(x,y,z) + \lambda g(x,y,z)$.
Taking partial derivatives, we get 
\begin{equation}\label{appendix 1}
L_x = 1 - s \lambda \delta_3^{s-1} x^{s-1} = 0, 
\end{equation}
\begin{equation}\label{appendix 2}
L_y= 1 - s \lambda \delta_2^{s-1} y^{s-1} = 0, 
\end{equation}
\begin{equation}\label{appendix 3}
L_z = 1 - s \lambda \delta_1^{s-1} z^{s-1} = 0, 
\end{equation}
\begin{equation}\label{appendix 4}
\lambda \left( \frac{t-1}{2} \delta_1^{s-1} \delta_2^{s-1} \delta_3^{s-1}
 - \delta_3^{s-1} x^s - \delta_2^{s-1} y^s - \delta_1^{s-1} z^s \right) = 0. 
\end{equation}
Note that $\lambda \neq 0$ otherwise we contradict (\ref{appendix 1}) so by (\ref{appendix 4}), 
\begin{equation}\label{appendix 5}
\frac{t-1}{2} \delta_1^{s-1} \delta_2^{s-1} \delta_2^{s-1}
 =
  \delta_3^{s-1} x^s + \delta_2^{s-1} y^s + \delta_1^{s-1} z^s.
\end{equation}
From (\ref{appendix 1}), (\ref{appendix 2}), and (\ref{appendix 3}) we have 
\begin{equation}\label{x y z equation}
\left( \frac{1}{ 2 \lambda } \right)^{ \frac{1}{s - 1}} = \delta_3 x = \delta_2 y = \delta_1 z.
\end{equation}
Combining this with (\ref{appendix 5}) and using $\delta_3 = 1 - \delta_1 - \delta_2$, we get an equation 
that can be solved for $x$ to obtain 
\[
x = 
\left(   \frac{ (t - 1) \delta_1^s \delta_2^s }{   2 ( \delta_1 ( 1 - \delta_1 ) + \delta_2 ( 1 - \delta_2) - \delta_1 \delta_2 ) } 
\right)^{1/s}.
\]
Using (\ref{x y z equation}), we can then solve for $y$ and $z$ and get
\[
x + y + z 
= 
\frac{  (t-1)^{1/s} }{ 2^{1/s} } \left( \delta_1 ( 1 - \delta_1 ) + \delta_2 ( 1 - \delta_2) - \delta_1 \delta_2 \right)^{1 - 1/s}.
\]
The maximum value of 
\[
\delta_1 ( 1 - \delta_1 ) + \delta_2 ( 1 - \delta_2) - \delta_1 \delta_2
\]
over all 
$\delta_1 , \delta_2 \geq 0$ for which $0 \leq \delta_1 + \delta_2 \leq 1$ is $\frac{1}{3}$ and it is obtained 
only when $\delta_1 = \delta_2 = \frac{1}{3}$.  
Therefore, 
\[
x + y + z \leq \frac{  (t - 1)^{1/s} }{ 2^{1/s} } \left( \frac{1}{3} \right)^{1 - 1/s}.
\]

\end{document}